\patchcmd{\section}{\normalfont}{\normalfont\Large}{}{}
\patchcmd{\section}{\scshape}{\bfseries}{}{}
\renewcommand{\@secnumfont}{\bfseries}
\let\originalforall=\forall
\renewcommand{\forall}{\mathop{\vcenter{\hbox{\Large$\originalforall$}}}}
\let\originalexists=\exists
\renewcommand{\exists}{\mathop{\vcenter{\hbox{\Large$\originalexists$}}}}
\newtheorem{thm}{Theorem}[]
\newtheorem{cor}[thm]{Corollary}
\date{\today}
\title{The spectral radius formula for Fourier-Stieltjes algebras}
\author{Przemys\l aw Ohrysko}
\address{Chalmers University of Technology and the University of Gothenburg}
\email{p.ohrysko@gmail.com}
\thanks{Supported by foundations managed by The Royal Swedish Academy of Sciences}
\author{Maria Roginskaya}
\address{Chalmers University of Technology and the University of Gothenburg}
\email{maria.roginskaya@chalmers.se}
\begin{document}
\baselineskip=21pt
\begin{abstract}
In this short note we first extend the validity of the spectral radius formula obtained in \cite{ag} to Fourier--Stieltjes algebras. The second part is devoted to showing that for the measure algebra on any locally compact non-discrete Abelian  group there are no non-trivial constraints between three quantities: the norm, the spectral radius and the supremum of the Fourier--Stieltjes transform even if we restrict our attention to measures with all convolution powers singular with respect to Haar measure.
\end{abstract}
\subjclass[2010]{Primary 43A10; Secondary 43A30.}

\keywords{Measure Algebras, Fourier--Stieltjes algebras.}
\maketitle
\section{Introduction}
We collect first some basic facts from Banach algebra theory and harmonic analysis in order to fix the notation (our main reference for Banach algebra theory is \cite{z}, for harmonic analysis check \cite{r}). For a commutative unital Banach algebra $A$, the Gelfand space of $A$ (the set of all multiplicative-linear functionals endowed with weak$^{\ast}$-topology) will be abbreviated $\triangle(A)$ and the Gelfand transform of an element $x\in A$ is a surjection $\widehat{x}:\triangle(A)\rightarrow\sigma(x)$ defined by the formula: $\widehat{x}(\varphi)=\varphi(x)$ for $\varphi\in\triangle(A)$ where $\sigma(x):=\{\lambda\in\mathbb{C}:\mu-\lambda\mathrm{1}\text{ is not invertible}\}$ is the spectrum of an element $x$.
Let $G$ be a locally compact Abelian group with its unitary dual $\widehat{G}$ and let $M(G)$ denote the Banach algebra of all complex-valued Borel regular measures equipped with the convolution product and the total variation norm. The Fourier-Stieltjes transform will be treated as a restriction of the Gelfand transform to $\widehat{G}$. $M(G)$ is also equipped with involution $\mu\mapsto\widetilde{\mu}$ where $\widetilde{\mu}(E):=\overline{\mu(-E)}$ for every Borel set $E\subset G$. A measure $\mu$ is Hermitian if $\mu=\widetilde{\mu}$ or equivalently, if its Fourier-Stieltjes transform is real-valued. The ideal of measures with Fourier-Stieltjes transforms vanishing at infinity is denoted by $M_{0}(G)$.
\\
Note that we have a direct sum decomposition $M(G)=L^{1}(G)\oplus M_{s}(G)$ where $L^{1}(G)$ is the group algebra identified via Radon-Nikodym theorem with the ideal of all absolutely continuous measures and $M_{s}(G)$ is a subspace consisting of singular (supported on a set of Haar measure zero) measures. For $\mu\in M(G)$ we will write $\mu=\mu_{a}+\mu_{s}$ with $\mu_{a}\in L^{1}(G)$ and $\mu_{s}\in M_{s}(G)$.
\\
The main result of the first section is presented in the framework of Fourier--Stieltjes algebras so we recall some basic information (the standard reference for this part is \cite{ey}, see also recent monograph \cite{kl}).
Let $G$ be a locally compact group and let $B(G)$ be the Fourier--Stieltjes algebra on the group $G$ i.e. the linear span of positive-definite continuous functions on $G$ equipped with the norm given by the duality $B(G)=\left(C^{\ast}(G)\right)^{\ast}$ where $C^{\ast}(G)$ is the full group $C^{\ast}$-algebra of $G$. It is well-known that $B(G)$ with pointwise product is a commutative semisimple unital Banach algebra with $G$ embedded in $\triangle(B(G))$ via point-evaluation functionals. As for the measure algebra we have an orthogonal decomposition of $B(G)=A(G)\oplus B_{s}(G)$ where $A(G)$ is the Fourier algebra of the group $G$ and $B_{s}(G)$ is a subspace consisting of singular elements (it was proved first in \cite{ar}, see \cite{ow} for modern presentation). For $f\in B(G)$ we will write $f=f_{a}+f_{s}$ with $f_{a}\in A(G)$ and $f_{s}\in B_{s}(G)$.
\\
In \cite{ag} the following spectral radius formula for measure algebra on compact (not necessarily Abelian) group was shown to be true.
\begin{thm}\label{wzo}
Let $G$ be a compact group and let $\mu\in M(G)$. Then the following formula holds:
\begin{equation*}
r(\mu)=\max\{\sup_{\sigma\in\widehat{G}}r(\widehat{\mu}(\sigma)),\inf_{n\in\mathbb{N}}\|\left(\mu^{\ast n}\right)_{s}\|^{\frac{1}{n}}\}.
\end{equation*}
\end{thm}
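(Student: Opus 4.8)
Write $R_1:=\sup_{\sigma\in\widehat G}r(\widehat\mu(\sigma))$ and $R_2:=\inf_{n}\|(\mu^{\ast n})_s\|^{1/n}$; the plan is to prove $r(\mu)\ge\max\{R_1,R_2\}$ and $r(\mu)\le\max\{R_1,R_2\}$ separately. The first inequality is the soft half. For each $\sigma\in\widehat G$ the Fourier--Stieltjes map $\nu\mapsto\widehat\nu(\sigma)$ is a unital homomorphism of $M(G)$ into $B(H_\sigma)$, and a unital homomorphism does not enlarge spectra, so $r(\widehat\mu(\sigma))\le r(\mu)$ and hence $R_1\le r(\mu)$. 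Moreover, since $L^1(G)$ is a closed ideal the singular part of a product of two measures coincides with the singular part of the product of their singular parts, so $\|(\mu^{\ast(n+m)})_s\|\le\|(\mu^{\ast n})_s\|\,\|(\mu^{\ast m})_s\|$; by Fekete's lemma $R_2=\lim_n\|(\mu^{\ast n})_s\|^{1/n}$, and from $\|(\mu^{\ast n})_s\|\le\|\mu^{\ast n}\|$ we get $R_2\le r(\mu)$. (Equivalently $R_2=r_{M(G)/L^1(G)}\bigl(\mu+L^1(G)\bigr)$, because $\|\mu+L^1(G)\|=\|\mu_s\|$.)

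For the reverse inequality I would fix $\lambda$ with $|\lambda|>\max\{R_1,R_2\}$ and show $\mu-\lambda\delta_e$ is invertible in $M(G)$. As $|\lambda|>R_2$, the image of $\mu-\lambda\delta_e$ is invertible in $M(G)/L^1(G)$, so there is $\eta\in M(G)$ with $(\mu-\lambda\delta_e)\ast\eta=\delta_e-f$ and $\eta\ast(\mu-\lambda\delta_e)=\delta_e-f'$ for some $f,f'\in L^1(G)$. Taking Fourier--Stieltjes transforms of the first identity, $(\widehat\mu(\sigma)-\lambda I)\widehat\eta(\sigma)=I-\widehat f(\sigma)$ for all $\sigma$. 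Since $f\in L^1(G)$, the Riemann--Lebesgue property gives $\|\widehat f(\sigma)\|\to0$, so $I-\widehat f(\sigma)$ is invertible outside a finite subset of $\widehat G$; since $|\lambda|>R_1$, the matrix $\widehat\mu(\sigma)-\lambda I$ is invertible for every $\sigma$; hence $\widehat\eta(\sigma)$ is invertible outside a finite set, and likewise (off a possibly different finite set) from the second identity. Now pick a trigonometric polynomial $g\in L^1(G)$ whose transform is supported on the union $S$ of these two finite sets and equals $I-\widehat\eta(\sigma)$ on $S$, and replace $\eta$ by $\eta+g$ and $f,f'$ by $f-(\mu-\lambda\delta_e)\ast g$ and $f'-g\ast(\mu-\lambda\delta_e)$ (still in $L^1(G)$); then $\widehat\eta(\sigma)$ is invertible for \emph{every} $\sigma\in\widehat G$.

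With this $\eta$, $I-\widehat f(\sigma)=(\widehat\mu(\sigma)-\lambda I)\widehat\eta(\sigma)$ is invertible for every $\sigma$, and since $\widehat f(\sigma)\to0$ the norms of these inverses are uniformly bounded. As $G$ is compact the Peter--Weyl theorem gives $C^\ast(G)=c_0\text{-}\bigoplus_{\sigma\in\widehat G}B(H_\sigma)$, and uniform invertibility of the matrix blocks is precisely invertibility of $\delta_e-f$ in the unitization of $C^\ast(G)$. By the symmetry of $L^1(G)$ for compact $G$ --- i.e. the spectral permanence $\sigma_{L^1(G)}(f)=\sigma_{C^\ast(G)}(f)$ in the spirit of Hulanicki's theorem --- $\delta_e-f$ is then invertible in the unitization of $L^1(G)$, and hence in $M(G)$; similarly $\delta_e-f'$ is invertible. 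Therefore $\mu-\lambda\delta_e$ has the right inverse $\eta\ast(\delta_e-f)^{-1}$ and the left inverse $(\delta_e-f')^{-1}\ast\eta$, so it is invertible; this proves $r(\mu)\le\max\{R_1,R_2\}$.

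The step I expect to be the main obstacle is this last descent from the enveloping $C^\ast$-algebra back to $M(G)$: it rests on the non-elementary fact that $L^1(G)$ is a symmetric Banach $\ast$-algebra when $G$ is compact. One must also check that the finite exceptional sets and the polynomial correction arising on the left and on the right are absorbed simultaneously by the single choice of $g$, and dispose of the degenerate cases $R_1=0$ or $R_2=0$; apart from these the remaining ingredients (Riemann--Lebesgue, the Peter--Weyl description of $C^\ast(G)$, Fekete's lemma) are routine. A more computational alternative would be to first establish the spectral radius formula $r_{L^1(G)}(h)=\sup_\sigma r(\widehat h(\sigma))$ inside the group algebra and then bound $\|(\mu^{\ast n})_a\|_1$ via a word expansion of $\mu^{\ast n}$ in $\mu_a$ and $\mu_s$, but that route is combinatorially heavier.
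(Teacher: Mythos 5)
Note first that the paper does not actually prove Theorem \ref{wzo}: it is quoted from \cite{ag}, and the only proof given in the text is for the commutative analogue in $B(G)$, where the key inequality $r(f)\le\max\{\|f\|_{\infty},\|f_{s}\|\}$ is read off from the splitting $\triangle(B(G))=G\cup h(A(G))$ of the Gelfand spectrum. That Gelfand-theoretic route is unavailable for $M(G)$ of a noncommutative compact group, so a genuinely noncommutative argument like yours is the right thing to attempt, and in outline it is correct. The lower bound is fine: $\nu\mapsto\widehat{\nu}(\sigma)$ is a unital homomorphism, and your identification $R_{2}=r_{M(G)/L^{1}(G)}\bigl(\mu+L^{1}(G)\bigr)$ is legitimate because the quotient norm equals $\|\mu_{s}\|$ (total variation is additive on mutually singular parts) and $(\mu\ast\nu)_{s}=(\mu_{s}\ast\nu_{s})_{s}$ since $L^{1}(G)$ is an ideal. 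The architecture of the upper bound --- invert $\mu-\lambda\delta_{e}$ modulo $L^{1}(G)$, use Riemann--Lebesgue together with $|\lambda|>R_{1}$ to make $\widehat{\eta}(\sigma)$ invertible off a finite set, correct $\eta$ by a trigonometric polynomial, then produce a left and a right inverse --- is also sound; note that each $\widehat{\eta}(\sigma)$ is a square matrix, so one-sided invertibility already gives invertibility and a single exceptional set would suffice.

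The step you rightly single out, the descent from invertibility of $\delta_{e}-f$ over $\widehat{G}$ to invertibility in $M(G)$, is the crux, and your justification is looser than it should be: symmetry (hermitianness) of $L^{1}(G)$ is literally a statement about self-adjoint elements, and to obtain $\sigma_{L^{1}(G)^{+}}(f)=\sigma_{C^{\ast}(G)^{+}}(f)$ for arbitrary $f$ one must add the standard polar trick (invert $(a-\lambda)^{\ast}\ast(a-\lambda)$ and $(a-\lambda)\ast(a-\lambda)^{\ast}$ to manufacture one-sided inverses), i.e.\ quote Pt\'ak--Palmer-type spectral permanence rather than Hulanicki's lemma. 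More to the point, for compact $G$ the whole detour through $C^{\ast}(G)$ and symmetry is unnecessary: write $f=p+g$ with $p$ a trigonometric polynomial and $\|g\|_{1}<1$; then $\delta_{e}-f=(\delta_{e}-g)\ast(\delta_{e}-q)$ with $q=(\delta_{e}-g)^{-1}\ast p$ again a trigonometric polynomial supported on the same finite set $F$ of representations, the matrix $I-\widehat{q}(\sigma)=(I-\widehat{g}(\sigma))^{-1}\bigl(I-\widehat{f}(\sigma)\bigr)$ is invertible for every $\sigma$, and $\delta_{e}-q$ is inverted explicitly by the trigonometric polynomial whose blocks are $I-(I-\widehat{q}(\sigma))^{-1}$ on $F$ and $0$ elsewhere. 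This yields the invertibility of $\delta_{e}-f$ (and of $\delta_{e}-f'$) in $M(G)$ by elementary means and closes the only real gap in your write-up; the rest of the argument then goes through as you describe.
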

Here $\widehat{G}$ is the set of all equivalence classes of irreducible unitary representations of the group $G$ and $\widehat{\mu}(\sigma)$ is the matrix-valued Fourier--Stieltjes transform.
\\
In the first section we will give a short proof of the counterpart of this formula for Fourier-Stieltjes algebras. The aim of the second part is to construct examples of measures exhibiting that there are no other connections between the norm, the spectral radius and the supremum of the Fourier--Stieltjes transform than the obvious ones.
\section{The spectral radius formula}
\begin{thm}
Let $G$ be a locally compact group and let $f\in B(G)$. Then the following formula holds true:
\begin{equation}\label{wz}
r(f)=\max\{\|f\|_{\infty},\inf_{n\in\mathbb{N}}\|\left(f^{n}\right)_{s}\|^{\frac{1}{n}}\}
\end{equation}
\end{thm}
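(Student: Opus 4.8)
The plan is to prove the two inequalities separately, exploiting the orthogonal decomposition $B(G)=A(G)\oplus B_s(G)$ together with the fact that $A(G)$ is a closed ideal in $B(G)$, so that for all $n$ the absolutely continuous part of a product is controlled: $(fg)_a = f_a g + f_s g_a$, hence $(f^n)_s$ is the ``purely singular'' residue and passing to the quotient $B(G)/A(G)$ makes the map $f\mapsto f_s$ multiplicative modulo $A(G)$. The quantity $\inf_n\|(f^n)_s\|^{1/n}$ is therefore nothing but the spectral radius of the image of $f$ in the quotient Banach algebra $B(G)/A(G)$, and $\|f\|_\infty = r_{C_0}(f)$ in a suitable sense since $f_a\in A(G)\subseteq C_0(G)$ does not affect the sup at infinity. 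I would set this up cleanly first, noting $A(G)$ is semisimple with Gelfand space $G$ acting by point evaluations, so $\|f_a+0\|_\infty$ issues reduce to $\|f\|_\infty$ on $G$.

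For the lower bound $r(f)\ge\max\{\|f\|_\infty,\inf_n\|(f^n)_s\|^{1/n}\}$: since $G\hookrightarrow\triangle(B(G))$ by point evaluations and $\widehat f$ on these functionals is just $f$ itself, we get $\|f\|_\infty\le\sup_{\varphi\in\triangle(B(G))}|\varphi(f)| = r(f)$ by semisimplicity. For the other term, the quotient map $q:B(G)\to B(G)/A(G)$ is a contractive algebra homomorphism, so $r_{B(G)/A(G)}(q(f))\le r_{B(G)}(f)$; and $r_{B(G)/A(G)}(q(f)) = \lim_n\|q(f)^n\|^{1/n} = \lim_n\|q(f^n)\|^{1/n}$, while $\|q(f^n)\| = \mathrm{dist}(f^n, A(G)) \le \|(f^n)_s\|$ because $f^n - (f^n)_s = (f^n)_a\in A(G)$. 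Taking infimum over $n$ gives $\inf_n\|(f^n)_s\|^{1/n}\ge r_{B(G)/A(G)}(q(f))$, which goes the wrong way — so here I instead use that $\|(f^n)_s\|\le C\|q(f^n)\|$ is false in general, and must argue more carefully: the correct statement is that by the \emph{orthogonality} of the decomposition, $\|(f^n)_s\| \le \|f^n\|$ trivially, but to get the spectral radius I want $\inf_n\|(f^n)_s\|^{1/n} = r_{B(G)/A(G)}(q(f))$ exactly, using that the decomposition $B(G)=A(G)\oplus B_s(G)$ is \emph{topological}, so the projection onto $B_s(G)$ is bounded with some norm, yielding $\|q(f^n)\|\le\|(f^n)_s\|\le K\|q(f^n)\|$ and the $1/n$-th powers have the same limit. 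Then $\inf_n\|(f^n)_s\|^{1/n} = r_{B(G)/A(G)}(q(f)) \le r(f)$.

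For the upper bound $r(f)\le\max\{\|f\|_\infty,\inf_n\|(f^n)_s\|^{1/n}\}$: I would use that the spectrum of $f$ in $B(G)$ decomposes according to where the multiplicative functionals ``live.'' Concretely, $\triangle(B(G))$ splits (up to the point $\lambda=0$ and the closure issues) into functionals that do not annihilate $A(G)$ — these are exactly the point evaluations at $G$, by semisimplicity of $A(G)$ and the fact that $A(G)$ is an ideal, so they contribute $\widehat f$-values in $\overline{f(G)}$, giving spectral radius $\le\|f\|_\infty$ — and functionals that annihilate $A(G)$, which factor through $q:B(G)\to B(G)/A(G)$ and contribute values of modulus $\le r_{B(G)/A(G)}(q(f)) = \inf_n\|(f^n)_s\|^{1/n}$. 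Since $\sigma_{B(G)}(f)\setminus\{0\} \subseteq \overline{f(G)}\cup\sigma_{B(G)/A(G)}(q(f))$ (every nonzero point of the spectrum of an element of a commutative semisimple Banach algebra is $\widehat f(\varphi)$ for some $\varphi$, and $\varphi$ either kills $A(G)$ or not), taking moduli gives the result; the point $\lambda = 0$ is harmless for the spectral radius.

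The main obstacle I anticipate is the structural input about $\triangle(B(G))$: justifying that a multiplicative functional on $B(G)$ which is nonzero on the ideal $A(G)$ must be a point evaluation at some $g\in G$. This is the analogue, for Fourier--Stieltjes algebras, of the classical fact that the only multiplicative functionals on $M(G)$ that restrict nontrivially to $L^1(G)$ are given by characters; it should follow from semisimplicity of $A(G)$, the identification $\triangle(A(G)) = G$, and the standard extension-of-characters argument (a character of an ideal extends uniquely to the containing algebra), but I would want to cite \cite{ey} or \cite{kl} for the identification $\triangle(A(G))=G$ and spell out the ideal-extension step. A secondary technical point is making sure the $\inf$ over $n$ in the statement genuinely equals the honest spectral radius in the quotient (so that it is attained as a limit and is submultiplicative along powers), which is where the topological — not merely algebraic — nature of the direct sum $B(G)=A(G)\oplus B_s(G)$ is used; this is guaranteed by \cite{ar,ow}.
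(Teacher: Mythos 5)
Your proposal is correct and follows essentially the same route as the paper: the lower bound reduces to $\|(f^{n})_{s}\|\leq\|f^{n}\|$ (your detour through the quotient $B(G)/A(G)$ is an equivalent repackaging, since the $\ell^{1}$-nature of the decomposition gives $\|q(g)\|=\|g_{s}\|$), and the upper bound rests on exactly the splitting $\triangle(B(G))=G\cup h(A(G))$ that the paper uses, with the hull contribution controlled by $\inf_{n}\|(f^{n})_{s}\|^{1/n}$. One small caveat: the identity $(fg)_{a}=f_{a}g+f_{s}g_{a}$ you state at the outset is false in general (the product $f_{s}g_{s}$ of two singular elements need not be singular), but you never actually use it --- the multiplicativity of $f\mapsto f_{s}$ modulo $A(G)$ that you need follows directly from $q$ being an algebra homomorphism.
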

\begin{proof}
Clearly, $r(f)\geq \|f\|_{\infty}$. Let us take $n\in\mathbb{N}$. Then $f^{n}=\left(f^{n}\right)_{a}+(f^{n})_{s}$. This implies $\|f^{n}\|=\|\left(f^{n}\right)_{a}\|+\|\left(f^{n}\right)_{s}\|\geq\|\left(f^{n}\right)_{s}\|$.
Taking the infimum on both sides and applying the spectral radius formula we obtain the first inequality.
\\
As $A(G)$ is an ideal in $B(G)$ we have the splitting of $\triangle(B(G))$:
\begin{equation}\label{roz}
\triangle(B(G))=\triangle(A(G))\cup h(A(G))=G\cup h(A(G)),
\end{equation}
where $h(A(G))=\{\varphi\in\triangle(B(G)):\varphi|_{A(G)}=0\}$.

For $f\in B(G)$ and any $\varphi\in\triangle(B(G))$ we have $\varphi(f)=\varphi(f_{a})+\varphi(f_{s})$. If $\varphi\in\triangle(A(G))$ then, of course, $|\varphi(f)|=|f(x)|$ for some $x\in G$ and so $|\varphi(f)|\leq \|f\|_{\infty}$. In case $\varphi\in h(A(G))$ we have $|\varphi(f)|=|\varphi(f_{s})|\leq r(f_{s})\leq \|f_{s}\|$. The above considerations are summarized as the following inequality:
\begin{equation*}
r(f)\leq\max\{\|f\|_{\infty},\|f_{s}\|\}
\end{equation*}
Let us fix $n\in\mathbb{N}$ and apply this inequality to $f^{n}$. As $r(f^{n})=r(f)^{n}$ we get
\begin{equation*}
r(f)^{n}\leq\max\{\|f\|_{\infty}^{n},\|\left(f^{n}\right)_{s}\|\}.
\end{equation*}
To finish the proof of (\ref{wz}) we need only to take the $\frac{1}{n}$ power on both sides and infimum over $n$.
\end{proof}
\begin{cor}\label{dwa}
Let $f\in B(G)$ satisfy $r(f)>\|f\|_{\infty}$. Then at least one of the following situations occur:
\begin{enumerate}
  \item For every $n\in\mathbb{N}$ we have $f^{n}\in B_{s}(G)$.
  \item $\|f\|>r(f)$.
\end{enumerate}
\end{cor}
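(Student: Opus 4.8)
The plan is to extract everything directly from the spectral radius formula~(\ref{wz}) just established. Since by hypothesis $r(f)>\|f\|_{\infty}$, the maximum in~(\ref{wz}) cannot be realized by the first term, so we must have $r(f)=\inf_{n\in\mathbb{N}}\|(f^{n})_{s}\|^{\frac{1}{n}}$. In particular, for every $n\in\mathbb{N}$ the infimum is a lower bound, which rearranges to the key inequality $\|(f^{n})_{s}\|\geq r(f)^{n}$.

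Next I would prove the corollary in its equivalent contrapositive form: assuming that alternative (1) fails, I derive alternative (2). So suppose there is some $n_{0}\in\mathbb{N}$ with $f^{n_{0}}\notin B_{s}(G)$; by the decomposition $B(G)=A(G)\oplus B_{s}(G)$ this means precisely that $(f^{n_{0}})_{a}\neq 0$, hence $\|(f^{n_{0}})_{a}\|>0$. Using the additivity of the norm under this decomposition (the identity $\|g\|=\|g_{a}\|+\|g_{s}\|$, already invoked for $g=f^{n}$ in the proof of the Theorem) together with the lower bound from the previous paragraph, one gets $\|f^{n_{0}}\|=\|(f^{n_{0}})_{a}\|+\|(f^{n_{0}})_{s}\|\geq\|(f^{n_{0}})_{a}\|+r(f)^{n_{0}}>r(f)^{n_{0}}$. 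Finally, submultiplicativity of the norm gives $\|f\|^{n_{0}}\geq\|f^{n_{0}}\|>r(f)^{n_{0}}$, and taking the $n_{0}$-th root yields $\|f\|>r(f)$, which is exactly conclusion (2). Therefore either (1) or (2) holds.

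There is no genuine obstacle here, since the statement is a formal consequence of~(\ref{wz}); the only points that need (minor) care are that the splitting of $B(G)$ is $\ell^{1}$-isometric so that the two norms really add, and that the strict inequality produced by $\|(f^{n_{0}})_{a}\|>0$ is preserved when passing to the $n_{0}$-th root.
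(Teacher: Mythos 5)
Your proposal is correct and follows essentially the same route as the paper: both arguments use the hypothesis $r(f)>\|f\|_{\infty}$ to reduce the spectral radius formula to $r(f)=\inf_{n}\|(f^{n})_{s}\|^{1/n}$, then exploit the additivity $\|f^{n_{0}}\|=\|(f^{n_{0}})_{a}\|+\|(f^{n_{0}})_{s}\|$ to get the strict inequality $\|(f^{n_{0}})_{s}\|<\|f^{n_{0}}\|$, and finish with submultiplicativity. The only difference is cosmetic: you take $n_{0}$-th powers where the paper takes $n_{0}$-th roots.
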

\begin{proof}
Suppose that $f^{n_{0}}\notin B_{s}(G)$ for some $n_{0}\in \mathbb{N}$. Then by Theorem \ref{wzo} we obtain
\begin{equation*}
r(f)=\inf_{n\in\mathbb{N}}\|(f^{n})_{s}\|^{\frac{1}{n}}\leq\|(f^{n_{0}})_{s}\|^{\frac{1}{n_{0}}}<\|f^{n_{0}}\|^{\frac{1}{n_{0}}}\leq\|f\|,
\end{equation*}
which finishes the proof.
\end{proof}
Our last aim for this section is to show that neither of the two items in Corollary \ref{dwa} implies the other and also that both of them may hold at the same time providing explicit examples of measures on the circle group. The last one will be also used in the next section.
\begin{itemize}
  \item Example of a measure satisfying: $\|\mu\|>r(\mu)>\|\widehat{\mu}\|_{\infty}$ but with non-singular convolution powers:
  \\
  Let us consider $\mu=\frac{1}{2}R-\frac{1}{2}\mathrm{m}$, where $R=\prod_{k=1}^{\infty}(1+\cos(3^{k}t))$ is the classical Riesz product measure (understood as a weak$^{\ast}$ limit of finite products) and $m$ is the normalized Lebesgue measure on $\mathbb{T}$. Let us recall that $R$ is a continuous probability measure with independent powers (i.e. $R^{\ast n}\bot R^{\ast m}$ for $n\neq m$, see \cite{bm}) and satisfying $\sigma(R)=\overline{\mathbb{D}}=\{z\in\mathbb{C}:|z|\leq 1\}$ (check the chapter of \cite{grmc} on Riesz products). Moreover,
  \begin{equation*}
  \widehat{R}(\mathbb{Z})=\{0\}\cup\left\{\frac{1}{2^{n}}\right\}_{n\in\mathbb{N}}\cup \{1\}.
  \end{equation*}
  We clearly have $\|\mu\|=1$. It is also immediate that $\|\widehat{\mu}\|_{\infty}=\frac{1}{4}$. Taking into account (\ref{roz}) we get
  \begin{gather*}
  \sigma(\mu)=\widehat{\mu}(\mathbb{Z})\cup\widehat{\mu}(\triangle(M(\mathbb{T}))\setminus\mathbb{Z})=\\
  =\{0\}\cup\left\{\frac{1}{2^{n+1}}\right\}_{n=1}^{\infty}\cup\frac{1}{2}\widehat{R}(\triangle(M(\mathbb{T}))\setminus\mathbb{Z})=\\
  =\frac{1}{2}\overline{\mathbb{D}}\text{ as }\sigma(R)=\overline{\mathbb{D}}.
  \end{gather*}
  \item Example of a measure with all convolution powers singular with the properties $\|\mu\|=r(\mu)>\|\widehat{\mu}\|_{\infty}$.
  \\
  Put $\mu=\frac{1}{2}\left(R-R^{\ast 2}\right)$. Then $\|\mu\|=1$ and all convolution powers of $\mu$ are singular. By the spectral properties of Riesz products there exists $\varphi\in\triangle(M(\mathbb{T}))$ such that $\varphi(R)=-1$. This gives $\varphi(\mu)=-1$ and thus $r(\mu)=1$. Also, $\|\widehat{\mu}\|_{\infty}=\frac{1}{8}$.
  \item Example of a measure with all convolution powers singular satisfying additionally $\|\mu\|>r(\mu)>\|\widehat{\mu}\|_{\infty}$.
  \\
  Let $q_{0}(z):=\frac{1}{4}\left(z^{5}-z^{4}+z^{2}-z\right)=\frac{1}{4}z(z^{3}+1)(z-1)$. Then $q_{0}(1)=0$ and by the maximum modulus principle we get
  \begin{equation}\label{pierw}
  \sup_{z\in\left\{\frac{1}{2^{n}}\right\}_{n\in\mathbb{N}}\cup\{1\}}|q_{0}(z)|=\sup_{z\in\left\{\frac{1}{2^{n}}\right\}_{n\in\mathbb{N}}}|q_{0}(z)|<\sup_{z\in\overline{\mathbb{D}}}|q_{0}(z)|.
  \end{equation}
  Also, using maximum modulus principle again,
  \begin{equation*}
  \sup_{z\in\overline{\mathbb{D}}}|q_{0}(z)|=\sup_{z\in\mathbb{T}}|q_{0}(z)|\leq\frac{1}{4}\sup_{z\in\mathbb{T}}|z^{3}+1|\cdot \sup_{z\in\mathbb{T}}|z-1|=1.
  \end{equation*}
  However, as the supremum of $|z^{3}+1|$ on $\mathbb{T}$ is attained only for the roots of unity of order three and the supremum of $|z-1|$ as attained only for $z=-1$ we obtain
  \begin{equation}\label{dru}
  \sup_{z\in\overline{\mathbb{D}}}|q_{0}(z)|<1.
  \end{equation}
  It is convenient to use the following convention: for an algebraic polynomial $f(z)=a_{n}z^{n}+a_{n-1}z+\ldots+a_{1}z+a_{0}$ we put
  \begin{equation*}
  |f|_{1}:=\sum_{k=0}^{n}|a_{k}|
  \end{equation*}
  Let $\mu:=q_{0}(R)$ where $R$ is the classical Riesz product. Then $\|\mu\|=|q_{0}|_{1}=1$ as $R$ has independent powers. By the spectral mapping theorem
  \begin{equation*}
  r(\mu)=\sup_{z\in\overline{\mathbb{D}}}|q_{0}(z)|\text{ as }\sigma(R)=\overline{\mathbb{D}}.
  \end{equation*}
  By (\ref{dru}) we thus have $r(\mu)<1$. Using the properties of the functional calculus again we get
  \begin{equation*}
  \|\widehat{\mu}\|_{\infty}=\sup_{z\in\left\{\frac{1}{2^{n}}\right\}_{n\in\mathbb{N}}\cup\{1\}}|q_{0}(z)|.
  \end{equation*}
  The application of (\ref{pierw}) finishes the argument.
\end{itemize}
\section{The construction}
In this section we are going to show that there are no non-trivial constraints between the quantities $\|\mu\|$, $r(\mu)$ and $\|\widehat{\mu}\|_{\infty}$ even if we restrict our attention to measures with all convolution powers singular.
\begin{thm}\label{prz}
Let $a,b$ be two fixed numbers satisfying $0<b\leq a\leq 1$ and let $G$ be a locally compact Abelian non-discrete group. Then there exists a measure $\mu\in M_{0}(G)$ with all convolution powers singular such that $\|\widehat{\mu}\|_{\infty}=b$, $r(\mu)=a$ and $\|\mu\|=1$.
\end{thm}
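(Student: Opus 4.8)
The plan is to realise $\mu$ by applying the polynomial functional calculus to a single ``building block'' measure on $G$, so that all of the analytic work is concentrated in the construction of that measure while the rest reduces to an elementary computation with polynomials. Suppose $\nu\in M_{0}(G)$ is a continuous probability measure with independent convolution powers (i.e.\ $\nu^{\ast n}\perp\nu^{\ast m}$ for $n\neq m$, with $\nu^{\ast 0}:=\delta_{0}$), all of whose convolution powers are singular and lie in $M_{0}(G)$. Mutual singularity together with $\|\nu^{\ast k}\|=1$ (which holds because $\nu$ is a probability measure) makes $(c_{k})\mapsto\sum_{k\geq 0}c_{k}\nu^{\ast k}$ an isometric unital embedding of the convolution algebra $\ell^{1}(\mathbb{N}_{0})$ into $M(G)$ taking the generator to $\nu$; since the spectrum of that generator in $\ell^{1}(\mathbb{N}_{0})$ is $\overline{\mathbb{D}}$, and the spectrum computed in the larger algebra $M(G)$ is contained in it and contains its topological boundary, we obtain $\mathbb{T}\subseteq\sigma_{M(G)}(\nu)\subseteq\overline{\mathbb{D}}$. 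Now let $q(z)=\sum_{k=1}^{N}c_{k}z^{k}$ be a nonzero polynomial with $q(0)=0$ and set $\mu:=q(\nu)$. Then $\mu$ and every $\mu^{\ast j}$ are finite combinations of the $\nu^{\ast k}$ with $k\geq 1$, hence singular; since $q(0)=0$ one has $\widehat{\mu^{\ast j}}(\gamma)=\bigl(q(\widehat{\nu}(\gamma))\bigr)^{j}\to 0$ at infinity, so all powers of $\mu$ lie in $M_{0}(G)$; moreover $\|\mu\|=|q|_{1}$ (the sum of the moduli of the coefficients), by the spectral mapping theorem and the maximum modulus principle $r(\mu)=\sup_{z\in\sigma(\nu)}|q(z)|=\sup_{|z|=1}|q(z)|=:M(q)$, and $\widehat{\mu}(\gamma)=q(\widehat{\nu}(\gamma))$ gives $\|\widehat{\mu}\|_{\infty}=\sup\{\,|q(w)|:w\in\widehat{\nu}(\widehat{G})\,\}$. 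So it suffices to choose $q$ and $\nu$ suitably.

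For the polynomial: given $0<b\leq a\leq 1$, I want $q$ with $q(0)=0$, $|q|_{1}=1$, $M(q)=a$ and $|q(1)|=b$. Take $q_{0}(z)=z(1-z)p(z)$, so that $q_{0}(0)=q_{0}(1)=0$. The ratio $M(q_{0})/|q_{0}|_{1}$ depends continuously on $p$, equals $1$ when $p(z)=z^{N}$, is strictly less than $1$ already for $p(z)=z^{3}+1$ (this is, up to normalisation, the third example of the preceding section), and is made arbitrarily small by taking $p$ with many coefficients of incompatible phase (Rudin--Shapiro-type polynomials); hence it assumes every value in $(0,1]$, and after rescaling we may assume $|q_{0}|_{1}=1$ and $M(q_{0})=a$. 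Because $|q_{0}|$ vanishes at the point $1\in\mathbb{T}$ and reaches its maximum $a$ on $\mathbb{T}$, the continuous map $\theta\mapsto|q_{0}(e^{i\theta})|$ has image exactly $[0,a]$; pick $t$ with $|q_{0}(e^{it})|=b$ and put $q(z):=q_{0}(e^{it}z)$. This changes neither $|q|_{1}$ nor $M(q)$, and $|q(1)|=|q_{0}(e^{it})|=b$.

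Finally, having fixed this $q$, choose $\delta>0$ small enough that $\sup_{|w|\leq\delta}|q(w)|\leq b$ (possible since $q(0)=0$), and take a building block $\nu=\nu_{\delta}$ with, in addition, $|\widehat{\nu_{\delta}}(\gamma)|\leq\delta$ for every non-trivial character $\gamma$. On $\mathbb{T}$ one may take a lacunary Riesz product $\prod_{k}\bigl(1+b_{k}\cos(n_{k}t)\bigr)$ with $(n_{k})$ growing fast enough, $b_{1}=2\delta$, $b_{k}\downarrow 0$, and $\sum_{k}b_{k}^{2}=\infty$: positivity, singularity and mutual singularity of all convolution powers, membership of all powers in $M_{0}$, and the bound on the transform off the trivial character are all standard facts about Riesz products. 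For a general non-discrete $G$ one descends to an open subgroup $H\cong\mathbb{R}^{a}\times K$ with $K$ compact, using that $M(H)$ sits in $M(G)$ as an isometric subalgebra and that $M_{0}(G)\cap M(H)=M_{0}(H)$ (both because $H$ is open), and carries out the analogous classical construction on $H$. For $\mu:=q(\nu_{\delta})$ we now get $\|\mu\|=|q|_{1}=1$; $r(\mu)=M(q)=a$, using $\mathbb{T}\subseteq\sigma(\nu_{\delta})\subseteq\overline{\mathbb{D}}$; and, writing $\rho:=\sup\{\,|q(w)|:w\in\widehat{\nu_{\delta}}(\widehat{G}),\ |w|\leq\delta\,\}\leq\sup_{|w|\leq\delta}|q(w)|\leq b$, the identity $\widehat{\mu}(\gamma)=q(\widehat{\nu_{\delta}}(\gamma))$ together with $\widehat{\nu_{\delta}}(\widehat{G})\subseteq\{1\}\cup\{w:|w|\leq\delta\}$ gives $\|\widehat{\mu}\|_{\infty}=\max\{|q(1)|,\rho\}=\max\{b,\rho\}=b$. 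Since $\mu$ and all its powers are singular and lie in $M_{0}(G)$, this proves the theorem.

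The one genuinely substantial ingredient above is the building block: a continuous probability measure in $M_{0}(G)$ with independent convolution powers, all powers singular and in $M_{0}(G)$, and Fourier--Stieltjes transform bounded by a prescribed $\delta$ off the trivial character, on an \emph{arbitrary} non-discrete locally compact Abelian group. I expect the transfer of the classical $\mathbb{T}$-construction to a general $G$ — and verifying singularity of all powers for Riesz products whose coefficients tend to $0$ — to be the delicate point; note, on the other hand, that no separate Wiener--Pitt-type argument about the spectrum is needed, since the only thing we use about $\sigma(\nu_{\delta})$, namely $\mathbb{T}\subseteq\sigma(\nu_{\delta})\subseteq\overline{\mathbb{D}}$, is automatic from the embedded copy of $\ell^{1}(\mathbb{N}_{0})$.
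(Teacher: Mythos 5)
Your overall architecture is sound and genuinely different from the paper's: where the paper takes from Graham--McGehee a \emph{Hermitian} independent-power probability measure in $M_{0}(G)$ with spectrum $\overline{\mathbb{D}}$ and then does all the hard work on the polynomial side (forcing $|p|\leq b$ on the whole of $[-1,1]$, which is what makes the paper's three-step gap-polynomial construction so laborious), you push the difficulty into the measure by demanding $|\widehat{\nu}(\gamma)|\leq\delta$ off the trivial character, after which the polynomial only needs to be controlled at $z=1$ and near $0$ --- a real simplification. Your observation that $\mathbb{T}\subseteq\sigma(\nu)\subseteq\overline{\mathbb{D}}$ already follows from the isometric copy of $\ell^{1}(\mathbb{N}_{0})$, so that $r(q(\nu))=\sup_{|z|=1}|q(z)|$ without any Wiener--Pitt input, is correct and nice. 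The polynomial step (intermediate value in the coefficient space between $z^{N+1}(1-z)$ and a Rudin--Shapiro-type factor, then a rotation $q(z)=q_{0}(e^{it}z)$ to set $|q(1)|=b$) also works.

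The gap is in the building block. On $\mathbb{T}$, the conditions you impose --- $b_{k}\downarrow 0$ and $\sum_{k}b_{k}^{2}=\infty$ --- do \emph{not} give singularity of all convolution powers: $\widehat{R^{\ast n}}$ at $\sum\epsilon_{k}n_{k}$ is $\prod(b_{k}/2)^{n|\epsilon_{k}|}$, so $\sum_{m\in\mathbb{Z}}|\widehat{R^{\ast n}}(m)|^{2}=\prod_{k}\bigl(1+2(b_{k}/2)^{2n}\bigr)$, which is finite whenever $\sum_{k}b_{k}^{2n}<\infty$; for $b_{k}=k^{-1/2}$ one gets $R$ singular but $R^{\ast 2}\in L^{2}(\mathbb{T})\subseteq L^{1}(\mathbb{T})$ absolutely continuous. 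You need $\sum_{k}b_{k}^{2n}=\infty$ for \emph{every} $n$ (e.g.\ $b_{k}\asymp 1/\log k$, capped at $2\delta$), and then a Brown--Moran/Peyri\`{e}re-type argument for mutual singularity of distinct powers --- fixable, but not the ``standard fact'' you invoke, precisely because the coefficients must tend to $0$ to stay in $M_{0}$. More seriously, for a general non-discrete locally compact Abelian $G$ the existence of an independent-power probability measure in $M_{0}(G)$, with all powers singular \emph{and} with transform uniformly at most $\delta$ off the trivial character, is exactly the substantial content you would need to supply; the one-sentence reduction to an open subgroup $\mathbb{R}^{a}\times K$ does not do it, and this extra smallness requirement is not provided by the results the paper cites (Theorem 6.1.1 and Corollary 7.3.2 of Graham--McGehee give Hermitian, $M_{0}$, independent powers, full spectrum --- nothing about a uniform bound on $\widehat{\nu}$ away from the trivial character). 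Until that existence statement is proved on an arbitrary non-discrete $G$, the theorem is only established for $\mathbb{T}$ (after correcting the coefficient condition), whereas the paper's weaker input is available off the shelf for all $G$.
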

The proof of this theorem depends on the existence of measures with special properties and we combine Theorem 6.1.1 and Corollary 7.3.2 from \cite{grmc} to obtain the formulation adequate for our needs.
\begin{thm}\label{og}
Let $G$ be a locally compact Abelian non-discrete group. Then there exists Hermitian independent power probability measure in $M_{0}(G)$. Moreover, the spectrum of this measure is the whole unit disc.
\end{thm}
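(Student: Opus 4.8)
The plan is to read the statement as the conjunction of two logically independent assertions---an existence claim (a Hermitian, independent power, probability measure lying in $M_{0}(G)$ exists) and a spectral computation (its spectrum is $\overline{\mathbb{D}}$)---and to supply each from the corresponding result in \cite{grmc}. Concretely, I would first quote the construction underlying Theorem 6.1.1 of \cite{grmc} to produce the measure, and then invoke Corollary 7.3.2 to identify its spectrum; the proof is thus an assembly of these two facts, and below I indicate what each contributes and why they fit together.

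For the existence part, the natural device is a generalized Riesz product over a sufficiently dissociate subset of $\widehat{G}$ (such sets exist in the dual of any non-discrete locally compact Abelian group). Choosing the elementary factors symmetric produces a continuous probability measure that is automatically Hermitian, while dissociateness forces the convolution powers to live on mutually singular carriers, i.e.\ yields the independent power property $\mu^{\ast n}\bot\mu^{\ast m}$ for $n\neq m$. The one point requiring genuine care is membership in $M_{0}(G)$: the naive Riesz product is \emph{not} in $M_{0}$ (already on $\mathbb{T}$ the classical $R$ satisfies $\widehat{R}(3^{k})=\frac{1}{2}$ for all $k$, so $\widehat{R}\notin c_{0}$), and it is precisely the refinement in Theorem 6.1.1 of \cite{grmc} that arranges the Fourier--Stieltjes transform to vanish at infinity while preserving the other three properties.

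For the spectral part, the elementary bounds come first. Since $\mu$ is a probability measure, $\|\mu^{\ast n}\|=1$ for every $n$, so $r(\mu)\leq 1$; on the other hand the trivial character gives the value $1$ in the transform, whence $r(\mu)\geq 1$ and $\sigma(\mu)\subseteq\overline{\mathbb{D}}$. Membership in $M_{0}(G)$ forces $\mu$ to be continuous, so together with independence of powers one gets $\|\sum_{k}a_{k}\mu^{\ast k}\|=\sum_{k}|a_{k}|$; thus $\mu$ generates an isometric copy of the power series algebra $A^{+}=\ell^{1}(\mathbb{Z}_{\geq 0})$, whose Gelfand space is $\overline{\mathbb{D}}$ with $\mu\leftrightarrow z$. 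The general subalgebra inequality $\partial\,\sigma_{A^{+}}(\mu)\subseteq\sigma(\mu)$ already places the whole unit circle $\mathbb{T}$ in $\sigma(\mu)$ for free; what remains---and what Corollary 7.3.2 of \cite{grmc} delivers---is the interior, i.e.\ that $\mu-\lambda\mathrm{1}$ fails to be invertible for every $|\lambda|<1$, equivalently that every character of the copy of $A^{+}$ extends to a character of $M(G)$.

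I expect the interior-filling (equivalently, the character-extension) to be the main obstacle, and it is exactly the deep ingredient borrowed from \cite{grmc}. It cannot come from a Neumann series, since $\sum_{n}\lambda^{-n}\mu^{\ast n}$ diverges in norm for $|\lambda|<1$; instead one must manufacture enough multiplicative functionals on $M(G)$ to realize every $\lambda\in\overline{\mathbb{D}}$ as a value $\varphi(\mu)$, which is supplied by the structure-semigroup and generalized-character machinery built on the independent power property. This is the measure-algebra asymmetry phenomenon in its sharpest form: although the Hermitian condition keeps $\widehat{\mu}$ real-valued on $\widehat{G}$, the Gelfand transform on the full maximal ideal space $\triangle(M(G))$ sweeps out the entire complex disc. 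Securing this simultaneously with the $M_{0}$ property is the other delicate point; since both are in place in \cite{grmc}, the proof reduces to combining Theorem 6.1.1 and Corollary 7.3.2 as indicated.
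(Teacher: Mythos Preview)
Your proposal is correct and matches the paper's approach exactly: the paper does not give an independent proof of this theorem but simply records it as the combination of Theorem~6.1.1 and Corollary~7.3.2 from \cite{grmc}, precisely the two ingredients you identify for the existence and spectral parts respectively. Your additional exposition (the dissociate Riesz-product construction, the $A^{+}$ embedding, and the character-extension mechanism) is accurate commentary on what those cited results actually provide, but goes beyond what the paper itself writes down.
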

An application of the functional calculus to a measure described in Theorem \ref{og} shows that in order to prove Theorem \ref{prz} it is enough to construct a polynomial with the properties specified below.
\begin{thm}
Let $a,b$ be two fixed numbers satisfying $0<b\leq a\leq 1$. Then there exists an algebraic polynomial $p$ with the following properties:
\begin{enumerate}
  \item $|p|_{1}=1$,
  \item $\sup_{z\in\overline{\mathbb{D}}}|p(z)|=a$,
  \item $\sup_{x\in [-1,1]}|p(x)|=b$, $p(1)=b$
\end{enumerate}
\end{thm}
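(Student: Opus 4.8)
The plan is to build $p$ as a two-parameter modification of the polynomial $q_0$ used in the last example of Section~2, which already achieves the case $a<1$, $b<a$ with all the qualitative features we need. The key structural idea is to separate the three conditions: condition (1) is a normalization that we can enforce at the very end by dividing by $|p|_1$, so really we must produce a polynomial $p$ with prescribed ratio between $\sup_{\overline{\mathbb D}}|p|$ and $|p|_1$ (this ratio is $a$) and prescribed ratio between $\sup_{[-1,1]}|p|$ and $|p|_1$ (this ratio is $b$), with the supremum on $[-1,1]$ actually attained at $z=1$.

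First I would treat the boundary cases, which are instructive and may need to be handled separately: if $a=b=1$ take $p(z)=1$; if $a=1$ and $b<1$ one needs a polynomial whose sup-norm on the disc equals its $\ell^1$-norm of coefficients (forcing, essentially, a monomial-like behaviour on the relevant extremal arguments) while its sup on $[-1,1]$ is strictly smaller — here a candidate like a suitable power $z^N$ composed into a base polynomial, or a Fejér-type kernel rescaled, should work, the point being that $z\mapsto z^N$ has constant modulus $1$ on $\mathbb T$ but modulus $<1$ on $(-1,1)$. For the generic range $0<b\le a<1$ the idea is to interpolate between $q_0$ (which realizes one particular pair of ratios) and lower-degree examples, for instance by taking $p(z) = \lambda z^k(z-1)\prod_{j}(z-\omega_j) + (\text{correction})$ where the factor $(z-1)$ guarantees $p(1)=0$... but note $p(1)=b$ is required with $b>0$, so actually the constant term or a $(z-1)$-free piece must be tuned so that $|p(1)|=b$ equals the sup on $[-1,1]$. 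The cleanest route is probably to write $p(z) = b\cdot g(z) + (\text{something vanishing at }1)$ and arrange parameters by a continuity/intermediate-value argument: define a continuous family $p_t$, $t\in[0,1]$, with $p_0$ and $p_1$ bracketing the desired ratios, and invoke connectedness.

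Concretely, I expect the construction to go as follows. Fix a "shape" polynomial such as $h(z)=z^m(z-1)(z^n+1)$ or a variant, which vanishes at $z=1$, has its disc-supremum attained only at points of $\mathbb T$ away from $[-1,1]$, and has $\sup_{[-1,1]}|h|<\sup_{\overline{\mathbb D}}|h|$; then consider $p_{c}(z) = c + h(z)$ or more flexibly $p_{c,d}(z)=c\,z^{\ell}(z+1)+d\,h(z)$ with real parameters. The quantities $|p|_1$, $M_{\mathbb D}(p):=\sup_{\overline{\mathbb D}}|p|$, $M_{[-1,1]}(p):=\sup_{[-1,1]}|p|$ and $|p(1)|$ are all continuous in the parameters; by scaling we reduce to $|p|_1=1$ and then the two ratios $M_{\mathbb D}/|p|_1 = a$ and $M_{[-1,1]}/|p|_1=b$ should be shown to range over the full triangle $\{0<b\le a\le1\}$ as the parameters vary, using a two-dimensional intermediate-value argument together with the extreme examples. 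One must also check along the way that $M_{[-1,1]}(p)=|p(1)|$, which is the role of choosing $h$ real with a dominant positive contribution at $1$; this is the point where one keeps the coefficients real and controls signs so that the real function $x\mapsto p(x)$ on $[-1,1]$ peaks at the right endpoint.

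The main obstacle I anticipate is \emph{simultaneously} controlling all three quantities with the exact equality $p(1)=b=\sup_{[-1,1]}|p|$ while letting the pair $(a,b)$ sweep the whole parameter region, especially near the degenerate edges $b=a$ (where the disc-maximum and the interval-maximum must coincide, forcing the extremal point of the disc onto $[-1,1]$, i.e. essentially onto $z=1$) and $a=1$ (where the $\ell^1$-norm and disc-norm must agree). Handling $b=a$ probably requires a genuinely different, simpler polynomial — likely $p(z)=b+(1-b)\,(\text{vanishing-at-}1\text{ gadget})$ scaled appropriately, or even just $p(z)=z^N$ type objects scaled by $b$ when $a=b$, and then perturbing; and the case $a=1$ likewise needs its own gadget as noted above. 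So I would organize the proof into: (i) the interior case $0<b<a<1$ via the deformation of $q_0$-type polynomials and an intermediate-value argument; (ii) the edge $b=a<1$; (iii) the edge $a=1$; (iv) the corner $a=b=1$ trivially. The routine parts — computing $|p|_1$ for polynomials with controlled coefficient signs, invoking the maximum modulus principle to move the disc-sup to $\mathbb T$, and the final renormalization — I would not belabor, citing the analogous computations already carried out for $q_0$ in Section~2.
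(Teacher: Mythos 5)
Your overall strategy --- a continuously parametrized family of polynomials plus an intermediate-value argument, with the edges $a=b$ and $a=b=1$ treated separately --- is exactly the skeleton of the paper's proof, and your corner case and the rotation idea implicit in your treatment of $b=a$ are sound (the paper rotates a polynomial with $\sup_{[-1,1]}|p|<\sup_{\overline{\mathbb D}}|p|=a$ by its extremal point $z_{max}$ so that the disc maximum lands at $z=1$; note that your alternative suggestion ``$p(z)=z^{N}$ scaled by $b$'' violates $|p|_{1}=1$). But the proposal never actually produces the family, and it is missing the two devices that make the plan work. First, to add a correction term that carries coefficient mass (so that $|p|_{1}=1$ can be maintained) while being uniformly tiny on $\overline{\mathbb D}$, the paper iterates $q_{n+1}(z)=q_{n}(z)\,q_{0}(z^{\deg q_{n}+1})$: the lacunary exponents keep $|q_{n}|_{1}=1$ while $\|q_{n}\|_{C(\overline{\mathbb D})}\le \|q_{0}\|_{C(\overline{\mathbb D})}^{\,n+1}\to 0$. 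Without some such gadget, your ansatz $p=b\,g+(\text{something vanishing at }1)$ cannot simultaneously satisfy $|p|_{1}=1$ and $\sup_{\overline{\mathbb D}}|p|=a<1$ with any quantitative control. (Also, $a=1$ needs no special gadget: in the paper's family $F(1)=\|p_{1}\|_{C(\overline{\mathbb D})}=1$ comes for free from evaluating at $z=i$.)

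Second, and more seriously, the requirement $\sup_{[-1,1]}|p|=b=p(1)$ together with $|p|_{1}=1$ is genuinely obstructed for small $b$: in the paper's family $p_{\alpha}(z)=\frac12(\alpha+b)z^{4}-\frac12(\alpha-b)z^{2}+(1-\alpha)q_{n}(z)$ the even quartic part has an interior extremum of size $\frac18\frac{(\alpha-b)^{2}}{\alpha+b}$, which stays below $b$ only when $b>b_{0}=\frac{-5+4\sqrt2}{7}$. Your sketch does not notice this interior-hump problem (``controls signs so that $x\mapsto p(x)$ peaks at the right endpoint'' is precisely the step that fails for small $b$), and consequently it has no mechanism for reaching small $b$. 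The paper needs an entire second step for this: run the construction with $\sqrt[k]{a},\sqrt[k]{b}$ in place of $a,b$, then take a lacunary $k$-fold product $w_{l+1,\alpha}(z)=w_{l,\alpha}(z)\,w_{0,\alpha}(z^{4\deg w_{l,\alpha}+1})$, which effectively raises all three quantities to the $k$-th power while preserving $|\cdot|_{1}=1$. As written, your argument could establish the result only on a proper subregion of $\{0<b\le a\le 1\}$, and even there the continuity/intermediate-value step is asserted rather than carried out for any concrete family.
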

\begin{proof}
We divide the argument into three steps.
\\
\underline{Step 1}: $\frac{-5+4\sqrt{2}}{7}=:b_{0}<b<a\leq 1$.
\\
The polynomial $q_{0}(z)=\frac{1}{4}z(z-1)(z+1)(z^{2}-z+1)=\frac{1}{4}z(z-1)(z^{3}+1)=\frac{1}{4}\left(z^{5}-z^{4}+z^{2}-z\right)$ introduced in the previous section has the following properties
\begin{itemize}
  \item $|q_{0}|_{1}=1$,
  \item $q_{0}(1)=q_{0}(0)=q_{0}(-1)=0$,
  \item $\|q_{0}\|_{C(\overline{\mathbb{D}})}<1$.
\end{itemize}
Let us consider an inductive procedure:
$q_{n+1}(z)=q_{n}(z)\cdot q_{0}(z^{\mathrm{deg}q_{n}+1})$ with $q_{0}$ defined as before. It is elementary to verify that for every $n\in\mathbb{N}$ the polynomial $q_{n}$ satisfies:
\begin{itemize}
\item $|q_{n}|_{1}=1$,
\item $q_{n}(1)=q_{n}(0)=q_{0}(-1)=0$,
\item $\|q_{n}\|_{C(\overline{\mathbb{D}})}\leq \|q_{0}\|_{C(\overline{\mathbb{D}})}^{n+1}$.
\end{itemize}
The first property follows from the fact that at each step we multiply $q_{n}$ by a polynomial with gaps between powers longer than $\mathrm{deg}q_{n}$.
Moreover, we get an elementary estimate of $q_{n}$ on the real axis $\left(\text{observe first } |q_{n}(x)|\leq |q_{1}(x)|\cdot \|q_{0}\|^{n-1}_{C(\overline{\mathbb{D}})}\right)$:
\begin{equation}\label{nap}
|q_{n}(x)|\leq c(1-x)^{2}(1+x)^{2}\cdot \|q_{0}\|_{C(\overline{\mathbb{D}})}^{n-1}\text{ for $x\in[-1,1]$ and $n\geq 1$},
\end{equation}
where $c$ is a numerical constant.
\\
Consider a family of polynomials $p_{\alpha}$ indexed by the parameter $\alpha\in [b,1]$:
\begin{equation}\label{defg}
p_{\alpha}(z)=\frac{1}{2}(\alpha+b)z^{4}-\frac{1}{2}(\alpha-b)z^{2}+(1-\alpha)q_{n}(z),
\end{equation}
where $n>1$ will be chosen later but let's take into account that the form of $q_{n}$ implies $|p_{\alpha}|_{1}=1$ for $\alpha\in [b,1]$. Clearly, we also get $p_{\alpha}(1)=p_{\alpha}(-1)=b$. We apply the
elementary calculus to the function $f(x)=\frac{1}{2}(\alpha+b)x^{4}-\frac{1}{2}(\alpha-b)x^{2}$ on the interval $[0,1]$ (it is enough as $f$ is even and the estimate of $|p_{\alpha}|$ is also even-type) finding local extremum at the point $x_{0}=\sqrt{\frac{\alpha-b}{2(\alpha+b)}}$ equal to
\begin{equation*}
f(x_{0})=-\frac{1}{8}\frac{(\alpha-b)^{2}}{\alpha+b}.
\end{equation*}
Taking the supremum over $\alpha\in [b,1]$ we obtain $|f(x_{0})|\leq \frac{1}{8}\frac{(1-b)^{2}}{1+b}$. Solving the inequality $\frac{1}{8}\frac{(1-b)^{2}}{1+b}<b$ gives $b>\frac{-5+4\sqrt{2}}{7}=b_{0}$ implying
$|f(x_{0})|<b$ for $b>b_{0}$.
\\
Let us fix $\varepsilon$ satisfying
\begin{equation}\label{eps}
\varepsilon<\min\left\{a-b,b-\frac{1}{8}\frac{(1-b)^{2}}{1+b}\right\}
\end{equation}
and take $n$ (depending only on $\varepsilon$) such that $\|q_{n}\|_{C(\overline{\mathbb{D}})}<\varepsilon$.
As $|p_{\alpha}(x_{0})|\leq |f(x_{0})|+\|q_{n}\|_{C(\overline{\mathbb{D}})}$, we obtain $|p_{\alpha}(x_{0})|<b$ for $\alpha\in [b,1]$
\\
Further analysis of $f$ shows that the function $x\mapsto |f(x)|$ has the following properties:
\begin{itemize}
  \item $f(0)=0$,
  \item Increases on the interval $[0,x_{0}]$ up to value $|f(x_{0})|$.
  \item Decreases on the interval $[x_{0},x_{1}]$ up to value $0$, where $x_{1}=\sqrt{\frac{\alpha-b}{\alpha+b}}$.
  \item Increases on the interval $[x_{1},1]$ up to value $b$ and $f(x)>0$ in this region.
\end{itemize}
It follows from the above discussion that for $x\in [0,x_{1}]$ we are allowed to use the same estimates as for the point $x_{0}$ to obtain the conclusion $|p_{\alpha}(x)|\leq b$. For the interval $[x_{1},1]$ we proceed as follows (here we use (\ref{nap})):
\begin{gather*}
|p_{\alpha}(x)|\leq |f(x)|+(1-\alpha)c\|q_{0}\|_{C(\overline{\mathbb{D}})}^{n-1}(1-x)^{2}(1+x)^{2} = \\
= \frac{1}{2}x^{2}\left((\alpha+b)x^{2}-(\alpha-b)\right)+(1-\alpha)c\|q_{0}\|_{C(\overline{\mathbb{D}})}^{n-1}(1-x)^{2}(1+x)^{2}\leq\\
\leq bx^{2}+\widetilde{c}\|q_{0}\|_{C(\overline{\mathbb{D}})}^{n-1}(1-x)^{2},
\end{gather*}
where $\widetilde{c}$ is another numerical constant. The last expression can be made smaller than $b$ by taking sufficiently big $n$ (depending on $b$ only) as it is a non-negative quadratic function with positive leading coefficient so one has to take care of the endpoints of the interval $[0,1]$ for which we have values $\widetilde{c}\|q_{0}\|_{C(\overline{\mathbb{D}})}^{n-1}$ and $b$.
\\
We start the estimates for the uniform norm of $p_{\alpha}$ on the unit disc. The upper bound:
\begin{equation}\label{gor}
\|p_{\alpha}\|_{C(\overline{\mathbb{D}})}\leq \alpha+\|q_{n}\|_{C(\overline{\mathbb{D}})}<\alpha+\varepsilon.
\end{equation}
In order to get the lower bound we simply observe that
\begin{equation}\label{dolnefin}
\|p_{\alpha}\|_{C(\overline{\mathbb{D}})}\geq |p_{\alpha}(i)|=|\alpha+(1-\alpha)q_{n}(i)|.
\end{equation}
Consider now the function $F(\alpha):=\|p_{\alpha}\|_{C(\overline{\mathbb{D}})}$ for $\alpha\in[b,1]$. Using $(\ref{gor})$ and $(\ref{dolnefin})$ we get
\begin{equation*}
F(b)\leq b+\varepsilon<a\text{ and }F(1)=1.
\end{equation*}
It is immediate that $F$ is continuous (in fact Lipschitz continuous) so there exists $\alpha_{0}\in [b,1]$ such that $F(\alpha_{0})=a$ and then the polynomial $p_{\alpha_{0}}$ satisfy the assertion of the theorem.
\\
\underline{Step 2}: $0<b<a\leq 1$.
\\
We find first $k\in\mathbb{N}$ (depending on $b$ only) such that $\sqrt[k]{b}>b_{0}$. Let us fix $\varepsilon$ restricted as in (\ref{eps}) with $b,a$ replaced by $\sqrt[k]{b},\sqrt[k]{a}$ respectively. Construct the polynomial $q_{n}$ starting with the data $\sqrt[k]{b}, \sqrt[k]{a}$ and $\varepsilon$ as in Step 1 and let us form the family $p_{\alpha}$ for $\alpha\in [\sqrt[k]{b},1]$ as in (\ref{defg}). Now we perform another inductive procedure to be executed $k$-times:
\begin{equation*}
w_{l+1,\alpha}(z)=w_{l,\alpha}(z)\cdot w_{0,\alpha}(z^{4\mathrm{deg}w_{l,\alpha}+1})\text{ with }w_{0,\alpha}=p_{\alpha}.
\end{equation*}
As $4\mathrm{deg}w_{l,\alpha}+1>\mathrm{deg}w_{l,\alpha}$ we get $|w_{l,\alpha}|_{1}=1$ for $l\in\mathbb{N}$ and $\alpha\in [\sqrt[k]{b},1]$.
\\
Let us define $w_{\alpha}:=w_{k-1,\alpha}$ for $\alpha\in[\sqrt[k]{b},1]$. Then $w_{\alpha}(1)=\left(p_{\alpha}(1)\right)^{k}=b$ and also $\|w_{\alpha}\|_{C[-1,1]}\leq \|p_{\alpha}\|^{k}_{C([-1,1])}\leq b$. Moreover, by (\ref{gor}), we obtain
\begin{equation*}
\|w_{\alpha}\|_{C(\overline{\mathbb{D}})}\leq (\alpha+\varepsilon)^{k}\leq \alpha^{k}+c\varepsilon,\text{ where $c$ is a numerical constant}
\end{equation*}
Decreasing $\varepsilon$ if necessary we get
\begin{equation}\label{gorfin}
\|w_{\sqrt[k]{b}}\|_{C(\overline{\mathbb{D}})}\leq b+c\varepsilon<a.
\end{equation}
For the lower estimate we use (\ref{dolnefin})
\begin{equation}\label{dolnepo}
\|w_{1}\|_{C(\overline{\mathbb{D}})}\geq |w_{1}(i)|=\prod_{l=0}^{k-1}|p_{1}(i^{4\mathrm{deg}w_{l,1}+1})|=(p_{1}(i))^{k}=1.
\end{equation}
The same continuity argument as in Step 1 using (\ref{gorfin}) and (\ref{dolnepo}) finishes the proof.
\\
\underline{Step 3}: $b=a=1$ and $b=a<1$.
\\
If $b=a=1$ then the polynomial $p(z)=z$ does the job. In case $b=a<1$ we take the polynomial $p$ with the properties $|p|_{1}=1$, $\|p\|_{C(\overline{\mathbb{D}})}=a$ and $\|p\|_{C([-1,1])}=\widetilde{b}$ for some positive $\widetilde{b}<a$ constructed in Steps 1 and 2. Let $z_{max}$ be the point on the circle for which $|p(z_{max})|=\|p\|_{C(\overline{\mathbb{D}})}$ and consider the polynomial $p_{max}(z):=p(z_{max}\cdot z)$. Then $|p_{max}|_{1}=1$, $\|p_{max}\|_{C(\overline{\mathbb{D}})}=\|p\|_{C(\overline{\mathbb{D}})}=a$ and $\|p_{max}\|_{C([-1,1])}=|p_{max}(1)|=|p(z_{max})|=a$.
\end{proof}

\end{document}